\nonstopmode \numberwithin{equation}{section}
\newtheorem{definition}{Definition}[section]
\newtheorem{theorem}{Theorem}[section]
 \newtheorem{corollary}{Corollary}[section]
\newtheorem{remark}{Remark}[section]
\begin{document}

\title[Extended $(p,q)$-Mittag-Leffler function]
 {Extended $(p,q)$-Mittag-Leffler function and its properties}

\author[A. Kilicman, G. Rahman, K.S. Nisar, S. Mubeen]{Adem Kilicman*, Gauhar Rahman, Kottakkaran Sooppy  Nisar,\\
  Shahid Mubeen}

\address{Adem Kilicman: Department of Mathematics, University Putra, Malaysia Serdang, Malaysia}
\email{akilic@upm.edu.my}

\address{Gauhar Rahman:    Department of Mathematics, International Islamic  University, Islamabad, Pakistan}
\email{gauhar55uom@gmail.com}

\address{Kottakkaran Sooppy  Nisar:    Department of Mathematics, College of Arts and Science-Wadi Al dawser, 11991,
Prince Sattam bin Abdulaziz University, Alkharj, Saudi Arabia}
\email{ksnisar1@gmail.com, n.sooppy@psau.edu.sa}

\address{Shahid Mubeen: Department of Mathematics, University of Sargodha,   Sargodha, Pakistan}
\email{smjhanda@gmail.com}

\subjclass[2000] {33B15, 33B20, 33C05, 33C20, 33C45, 33C60, 26A33}
\keywords{Mittag-Leffler function, extended beta function, fractional derivative, Mellin transform}

\thanks{*Corresponding author}

\begin{abstract}
 In this study our aim to define the extended $(p,q)$-Mittag-Leffler(ML) function by using extension of beta functions and to obtain the integral representation of new function. We also take the Mellin transform of this new function in terms of Wright hypergeometric function. Extended fractional derivative of the classical Mittag-Leffler(ML)  function leads the extended  $(p,q)$-Mittag-Leffler(ML) function.
\end{abstract}

\maketitle

\section{Introduction and Preliminaries }\label{Intro}

The Mittag-Leffler(ML) function occurs naturally in many real world proplems, especially in the solution of fractional integro-differential equations having the arbitrary order. The importance of such functions in physics and engineering is steadily increasing. Some applications of the Mittag-Leffler(ML) is carried out in the study of kinetic equation, study of Lorenz system, random walk, Levy flights and complex systems and also in applied problems such as fluid flow and electric network.

 We begin with the Prabhakar \cite{R16} Mittag-Leffler(ML) function, which is defined by
\begin{eqnarray}\label{a3}
E_{\rho,\sigma}^{\gamma}(z)=\sum\limits_{n=0}^{\infty}\frac{(\gamma)_n}{\Gamma(\rho n+\sigma)}\frac{z^n}{n!}, \ z, \sigma\in\mathbb{C};\ {\rm and} \ \mathfrak{R}(\rho)>0,
\end{eqnarray}
where $(\gamma)_n$ is the Pochhammer Symbol and given by:
\begin{eqnarray*}
(\gamma)_n=\gamma(\gamma+1)\cdots(\gamma+n-1),\end{eqnarray*} for $(n\in\mathbb{N}, \gamma\in\mathbb{C})$. In particular if $n=0$ then $(\gamma)_n =1$.

In theory of special functions, the applications and importance of Mittag-Leffler(ML) functions were studied by many researchers and their extensions are found in (\cite{R6}, \cite{R21}-\cite{R24}).
Shukla and Prajapati \cite{CP} (see also \cite{SZ}) have further introduced the function  $E_{\rho, \sigma}^{\delta,q}(z)$, defined as:
\begin{eqnarray}\label{a4}
E_{\rho, \sigma}^{\delta,q}(z)=\sum\limits_{n=0}^{\infty}\frac{(\delta)_{nq}}{\Gamma(\rho n+\sigma)}\frac{z^n}{n!},
\end{eqnarray}
where $z,\sigma,\delta\in\mathbb{C}$; and $\mathfrak{R}(\rho)>0$; $q>0$. By considering (\ref{a4}) for particular parameters one can deduce several different special functions. \\
  \"{O}zarslan and Yilmaz \cite{OY} have defined the following extended Mittag-Leffler(ML) function $E_{\rho,\sigma}^{\delta;c}(z;p)$ by
\begin{eqnarray}\label{a5}
E_{\rho,\sigma}^{\delta;c}(z;p)=\sum\limits_{n=0}^{\infty}\frac{\beta_p(\delta+n, c-\delta)}{\beta(\delta, c-\delta)}\frac{(c)_n}{\Gamma(\rho n+\sigma)}\frac{z^n}{n!},
\end{eqnarray}
where $p\geq0$, $\mathfrak{Re}(c)>\mathfrak{Re}(\delta)>0$ and $\beta_p(x,y)$ is extended beta function, see \cite{CQ,FM} given by
\begin{eqnarray}\label{dd}
\beta_p(x,y)=\int\limits_{0}^{1}t^{x-1}(1-t)^{y-1}e^{-\frac{p}{t(1-t)}}dt,
\end{eqnarray}

where $\mathfrak{Re}(p)>0$, $\mathfrak{Re}(x)>0$ and $\mathfrak{Re}(y)>0$.\\

Very recently Rahman et al. (\cite{Rahman1}, \cite{Rahman2} and \cite{Rahman3}) defined the fractional integrals and differentials formulas and pathway integral formulas of extended Mittag-Leffler(ML) functions.\\

 Choi et al. \cite{Choi2014} have defined the following extension of beta function by
\begin{eqnarray}\label{EEbeta}
\beta(x,y;p,q)= \int\limits_{0}^{\infty}t^{x-1}(1-t)^{y-1}e^{-\frac{p}{t}-\frac{q}{1-t}}dt
\end{eqnarray}
(where $\Re(p)>0, \Re(q)>0, \Re(x)>0$, and $\Re(y)>0$). \\
 Obviously, when $p=q$, then (\ref{EEbeta}) will lead to extended beta function in (\ref{dd}). In particular, if $p=q=0$, then  (\ref{EEbeta}) will lead to the classical beta function.

In the present work, we define further extension of Mittag-Leffler(ML) function as:
\begin{eqnarray}\label{Fextended}
E_{\alpha, \beta}^{\gamma, c}(z;p,q)=\sum\limits_{n=0}^{\infty}\frac{\beta(\gamma+n, c-\gamma;p,q)}{\beta(\gamma,c-\gamma)}\frac{(c)_n}{\Gamma(\alpha n+\beta)}\frac{z^n}{n!}
\end{eqnarray}
where $p, q\geq0$, $\Re(c)>\Re(\gamma)>0$ while $\beta(x,y;p,q)$ is considered as extension of extended beta function defined in
(\ref{EEbeta}).
\begin{remark} In particular 
(1) If setting $p=q$ in (\ref{Fextended}), then deduce to the extended Mittag-Leffler(ML) function as in the equation (\ref{a5}).\\
(ii) If setting  $p=q=0$ in (\ref{Fextended}), then reduce to Mittag-Leffler(ML) function as in (\ref{a3}).
\end{remark}
\section{Properties of extended $(p,q)$-Mittag-Leffler(ML) function}

Next, we define  the integral representations of \eqref{Fextended}.
\begin{theorem}\label{tha}
Let $c, \alpha, \beta, \gamma\in\mathbb{C}$, $\Re(c)>\Re(\gamma)>0$, $\Re(\alpha)>0$ and $\Re(\beta)>0$. Then the following integral representation hold true,
\begin{eqnarray}\label{integral}
E_{\alpha,\beta}^{\gamma,c}(z;p,q)=\frac{1}{\beta(\gamma,c-\gamma)}\int_0^1t^{\gamma-1}(1-t)^{c-\gamma-1}
\exp\Big(-\frac{p}{t}-\frac{q}{(1-t)}\Big)E_{\alpha,\beta}^{c}(tz)dt.
\end{eqnarray}
\end{theorem}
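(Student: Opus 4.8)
The plan is to start from the series definition of $E_{\alpha,\beta}^{\gamma,c}(z;p,q)$ in \eqref{Fextended} and to replace each coefficient $\beta(\gamma+n,c-\gamma;p,q)$ by its integral representation from \eqref{EEbeta}. Writing out this substitution gives
\begin{eqnarray*}
E_{\alpha,\beta}^{\gamma,c}(z;p,q)=\frac{1}{\beta(\gamma,c-\gamma)}\sum_{n=0}^{\infty}\frac{(c)_n}{\Gamma(\alpha n+\beta)}\frac{z^n}{n!}\int_0^1 t^{\gamma+n-1}(1-t)^{c-\gamma-1}\exp\Big(-\frac{p}{t}-\frac{q}{1-t}\Big)dt,
\end{eqnarray*}
so that the weight $t^{\gamma-1}(1-t)^{c-\gamma-1}\exp(-p/t-q/(1-t))$ becomes common to every term, while the leftover factor $t^n$ can be absorbed into $(tz)^n$ alongside $z^n$.

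The central step is to interchange the order of summation and integration, which would yield
\begin{eqnarray*}
E_{\alpha,\beta}^{\gamma,c}(z;p,q)=\frac{1}{\beta(\gamma,c-\gamma)}\int_0^1 t^{\gamma-1}(1-t)^{c-\gamma-1}\exp\Big(-\frac{p}{t}-\frac{q}{1-t}\Big)\left(\sum_{n=0}^{\infty}\frac{(c)_n}{\Gamma(\alpha n+\beta)}\frac{(tz)^n}{n!}\right)dt.
\end{eqnarray*}
Recognizing the inner sum as the Prabhakar function $E_{\alpha,\beta}^{c}(tz)$ of \eqref{a3} then delivers exactly the claimed representation \eqref{integral}, so the whole argument reduces to justifying the one exchange of limit operations.

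The main obstacle is therefore legitimizing that interchange, and I would handle it by a dominated-convergence (Fubini) argument. Since $\Re(\alpha)>0$, the series for $E_{\alpha,\beta}^{c}(tz)$ is an entire function of its argument and hence converges uniformly for $t\in[0,1]$ on the compact disc $|tz|\le|z|$; in particular it is uniformly bounded there. The prefactor $t^{\gamma-1}(1-t)^{c-\gamma-1}\exp(-p/t-q/(1-t))$ is absolutely integrable on $(0,1)$, because under $\Re(c)>\Re(\gamma)>0$ the exponential decay at both endpoints overwhelms the algebraic behavior—this is precisely the integrand defining the finite constant $\beta(\gamma,c-\gamma;p,q)$. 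The partial sums are thus dominated by an integrable function independent of $n$, so term-by-term integration is valid. Finally I would remark that the hypotheses $\Re(c)>\Re(\gamma)>0$, $\Re(\alpha)>0$, and $\Re(\beta)>0$ ensure that every $\beta(\gamma+n,c-\gamma;p,q)$ and the normalizing factor $\beta(\gamma,c-\gamma)$ are well defined, which closes the proof.
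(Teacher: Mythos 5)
Your argument follows exactly the paper's own proof: substitute the integral representation \eqref{EEbeta} for each coefficient $\beta(\gamma+n,c-\gamma;p,q)$ in the series \eqref{Fextended}, interchange sum and integral, and identify the resulting inner series with $E_{\alpha,\beta}^{c}(tz)$ via \eqref{a3}. The only difference is that you supply a dominated-convergence justification for the interchange, which the paper omits; this is a welcome addition but does not change the route.
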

\begin{proof}
Using (\ref{EEbeta}) in equation (\ref{Fextended}), we have
\begin{eqnarray*}
E_{\alpha,\beta}^{\gamma,c}(z;p,q)=\sum\limits_{n=0}^{\infty}\Big\{\int_0^1t^{\gamma+n-1}(1-t)^{c-\gamma-1}
\exp\Big(-\frac{p}{t}-\frac{q}{(1-t)}\Big)dt\Big\}\\
\times\frac{(c)_n}{\beta(\gamma,c-\gamma)}\frac{z^n}{\Gamma(\alpha n+\beta)n!}.
\end{eqnarray*}
which can be written as
\begin{eqnarray*}
E_{\alpha,\beta}^{\gamma,c}(z;p,q)=\int_0^1t^{\gamma-1}(1-t)^{c-\gamma-1}
\exp\Big(-\frac{p}{t}-\frac{q}{(1-t)}\Big)\\
\times\sum\limits_{n=0}^{\infty}
\frac{(c)_n}{\beta(\gamma,c-\gamma)}\frac{(tz)^n}{\Gamma(\alpha n+\beta)n!}dt.
\end{eqnarray*}
Using (\ref{a3}) in above equation, we get the desired integral representation.
\end{proof}
\begin{corollary}
Substituting $t=\frac{u}{1+u}$ in Theorem \ref{tha}, we get
\begin{eqnarray}
E_{\alpha,\beta}^{\gamma,c}(z;p,q)=\frac{1}{\beta(\gamma,c-\gamma)}\int_0^\infty\frac{u^{\gamma-1}}{(u+1)^c}
\exp\Big(-\frac{p(1+u)}{u}-q(1+u)\Big)\notag\\
\times E_{\alpha,\beta}^{c}\Big(\frac{uz}{1+u}\Big)du.
\end{eqnarray}
\end{corollary}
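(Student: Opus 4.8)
The plan is to carry out the change of variables $t=\frac{u}{1+u}$ directly in the integral representation \eqref{integral} of Theorem \ref{tha} and track how each factor transforms. First I would record the elementary consequences of this substitution: as $t$ ranges over $(0,1)$ the new variable $u$ ranges over $(0,\infty)$, and one computes $1-t=\frac{1}{1+u}$, $\frac{1}{t}=\frac{1+u}{u}$, $\frac{1}{1-t}=1+u$, together with the differential $dt=\frac{du}{(1+u)^2}$.

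Next I would substitute each factor into the integrand. The exponential becomes $\exp\left(-\frac{p(1+u)}{u}-q(1+u)\right)$ and the Mittag-Leffler factor becomes $E_{\alpha,\beta}^{c}\left(\frac{uz}{1+u}\right)$, both exactly as they appear in the claimed formula, while the normalizing constant $\frac{1}{\beta(\gamma,c-\gamma)}$ is left untouched. The only genuine bookkeeping lies in the algebraic prefactor, where I collect the powers of $1+u$ coming from $t^{\gamma-1}$, from $(1-t)^{c-\gamma-1}$, and from the Jacobian $dt$.

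The key identity to verify is that these powers telescope:
\begin{eqnarray*}
t^{\gamma-1}(1-t)^{c-\gamma-1}\,dt=\frac{u^{\gamma-1}}{(1+u)^{\gamma-1}}\cdot(1+u)^{-(c-\gamma-1)}\cdot\frac{du}{(1+u)^2}=\frac{u^{\gamma-1}}{(1+u)^{c}}\,du,
\end{eqnarray*}
since the exponents of $1+u$ sum to $-(\gamma-1)-(c-\gamma-1)-2=-c$. Combining this with the transformed exponential and Mittag-Leffler factors immediately produces the asserted representation.

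As for difficulties, there is no real obstacle here: the argument is a single substitution, and the only place one can slip is the arithmetic of the $(1+u)$-exponents, so I would double-check that cancellation carefully. For rigor one may also observe that $t\mapsto\frac{u}{1+u}$ is a smooth increasing bijection of $(0,\infty)$ onto $(0,1)$, which justifies the change of variables without any convergence concern beyond the hypotheses $\Re(c)>\Re(\gamma)>0$, $\Re(\alpha)>0$, $\Re(\beta)>0$ already imposed in Theorem \ref{tha}.
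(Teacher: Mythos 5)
Your proposal is correct and is exactly the argument the paper intends: the corollary is stated as an immediate consequence of substituting $t=\frac{u}{1+u}$ into the integral representation of Theorem \ref{tha}, and your bookkeeping of the factors $t^{\gamma-1}$, $(1-t)^{c-\gamma-1}$ and the Jacobian $dt=\frac{du}{(1+u)^2}$, yielding the combined factor $\frac{u^{\gamma-1}}{(1+u)^{c}}\,du$, checks out. No further comment is needed.
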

\begin{corollary}
Setting $t=\sin^2\theta$ in Theorem \ref{tha}, we get following integral representation
\begin{eqnarray}
E_{\alpha,\beta}^{\gamma,c}(z;p,q)=\frac{1}{\beta(\gamma,c-\gamma)}\Big[2\int_0^\infty\sin^{2\gamma-1}
\theta\cos^{2c-1}\theta\exp
\Big(-\frac{p}{\sin^2\theta}-\frac{q}{\cos^2\theta}\Big)\Big]\notag\\
\times E_{\alpha,\beta}^{c}\Big(z\sin^2\theta\Big)d\theta.
\end{eqnarray}
\end{corollary}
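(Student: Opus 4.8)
The plan is to obtain this representation directly from the integral formula of Theorem~\ref{tha} by a single change of variable, so that no new analytic input is required beyond what is already established there. Starting from
\[
E_{\alpha,\beta}^{\gamma,c}(z;p,q)=\frac{1}{\beta(\gamma,c-\gamma)}\int_0^1 t^{\gamma-1}(1-t)^{c-\gamma-1}\exp\Big(-\frac{p}{t}-\frac{q}{1-t}\Big)E_{\alpha,\beta}^{c}(tz)\,dt,
\]
I would substitute $t=\sin^2\theta$, which maps the interval $t\in[0,1]$ bijectively onto $\theta\in[0,\tfrac{\pi}{2}]$ and yields $1-t=\cos^2\theta$ together with the differential $dt=2\sin\theta\cos\theta\,d\theta$.

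Next I would rewrite each factor under this substitution: the monomial $t^{\gamma-1}$ becomes $\sin^{2\gamma-2}\theta$, the monomial $(1-t)^{c-\gamma-1}$ becomes $\cos^{2c-2\gamma-2}\theta$, the exponential weight turns into $\exp(-p/\sin^2\theta-q/\cos^2\theta)$, and the Mittag-Leffler factor becomes $E_{\alpha,\beta}^{c}(z\sin^2\theta)$. Absorbing the Jacobian $2\sin\theta\cos\theta$ into the trigonometric monomials raises the sine exponent to $2\gamma-1$ and the cosine exponent to $2c-2\gamma-1$, while producing the overall constant factor $2$. Collecting these pieces then gives the claimed formula.

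The computation carries no genuine analytic obstacle; the only point demanding care is the bookkeeping of exponents once the Jacobian is absorbed, together with the transformation of the limits of integration from $[0,1]$ into $[0,\tfrac{\pi}{2}]$. In particular I would double-check that the power of $\cos\theta$ combines to $2c-2\gamma-1$ and that the upper endpoint is $\tfrac{\pi}{2}$, since these are exactly the places where a transcription slip is most likely to occur. The legitimacy of the manipulation itself is inherited from Theorem~\ref{tha}, whose proof already justified the interchange of summation and integration under the stated hypotheses $\Re(c)>\Re(\gamma)>0$, $\Re(\alpha)>0$, and $\Re(\beta)>0$, so that the change of variable may be applied directly to the closed-form integrand.
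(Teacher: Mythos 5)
Your proposal is correct and is exactly the argument the paper intends (the paper states the corollary without proof, relying on the same substitution $t=\sin^2\theta$ in Theorem~\ref{tha}). Your careful bookkeeping in fact exposes two transcription errors in the paper's printed formula: the cosine exponent should be $2c-2\gamma-1$ (i.e.\ $2(c-\gamma)-1$), not $2c-1$, and the upper limit of integration should be $\tfrac{\pi}{2}$, not $\infty$ --- precisely the two spots you flagged as the likely places for a slip.
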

  Kurulay and Bayram \cite{Kurulay} defined the following,
\begin{equation}\label{GF}
E_{\alpha,\beta}^{c}(tz)=\beta E_{\alpha,\beta+1}^{c}(tz)+\alpha z\frac{d}{dz} E_{\alpha,\beta+1}^{c}(tz).
\end{equation}
By using \eqref{GF} and \eqref{integral}, we obtained the following relation:
\begin{corollary}
Let $p, q\geq0$, $\Re(c)>\Re(\gamma)>0$, $\Re(\alpha)>0$, $\Re(\beta)>0$, then following relation holds:
\begin{eqnarray}
E_{\alpha,\beta}^{\gamma,c}(z;p,q)=\beta E_{\alpha,\beta+1}^{\gamma,c}(z;p,q)+\alpha z\frac{d}{dz}E_{\alpha,\beta+1}^{\gamma,c}(z;p,q)
\end{eqnarray}
\end{corollary}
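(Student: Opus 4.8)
The plan is to insert the Kurulay--Bayram differential relation \eqref{GF} directly into the integral representation \eqref{integral} of Theorem~\ref{tha} and then split the result into two pieces. First I would replace $E_{\alpha,\beta}^{c}(tz)$ inside \eqref{integral} by the right-hand side of \eqref{GF}, giving
\begin{equation*}
E_{\alpha,\beta}^{\gamma,c}(z;p,q)=\frac{1}{\beta(\gamma,c-\gamma)}\int_0^1 t^{\gamma-1}(1-t)^{c-\gamma-1}\exp\Big(-\frac{p}{t}-\frac{q}{1-t}\Big)\Big[\beta E_{\alpha,\beta+1}^{c}(tz)+\alpha z\frac{d}{dz}E_{\alpha,\beta+1}^{c}(tz)\Big]dt .
\end{equation*}
Separating the bracket produces a term carrying the scalar factor $\beta$ and a term carrying the factor $\alpha z$.

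The first term is immediate: the weight $t^{\gamma-1}(1-t)^{c-\gamma-1}\exp(-p/t-q/(1-t))$ is exactly the kernel appearing in \eqref{integral} with the lower parameter raised from $\beta$ to $\beta+1$, so this term equals $\beta E_{\alpha,\beta+1}^{\gamma,c}(z;p,q)$. For the second term I would note that the $z$-derivative acts only on $E_{\alpha,\beta+1}^{c}(tz)$ while the weight is free of $z$, and the prefactor $\alpha z$ is constant in the integration variable $t$. I would therefore pull the differentiation $\frac{d}{dz}$ outside the $t$-integral, so that the second term becomes $\alpha z\frac{d}{dz}$ applied to $\frac{1}{\beta(\gamma,c-\gamma)}\int_0^1 t^{\gamma-1}(1-t)^{c-\gamma-1}\exp(-p/t-q/(1-t))E_{\alpha,\beta+1}^{c}(tz)\,dt$, which by \eqref{integral} (again with $\beta\to\beta+1$) is precisely $E_{\alpha,\beta+1}^{\gamma,c}(z;p,q)$. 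Adding the two contributions yields the claimed identity.

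The only point genuinely requiring care, and the step I expect to be the main (though essentially routine) obstacle, is the interchange of the $z$-differentiation with the $t$-integration used on the second term. This is secured by observing that for $\Re(\alpha)>0$ the function $E_{\alpha,\beta+1}^{c}(tz)$ is entire in $z$ and that, together with its $z$-derivative, it is bounded uniformly for $t\in[0,1]$ on each compact subset of the $z$-plane, while the weight $t^{\gamma-1}(1-t)^{c-\gamma-1}\exp(-p/t-q/(1-t))$ is absolutely integrable on $(0,1)$ (the exponential factor suppressing the endpoint singularities when $p,q>0$, and the hypotheses $\Re(c)>\Re(\gamma)>0$ guaranteeing integrability when $p=q=0$). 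Differentiation under the integral sign is then legitimate, and one may equivalently justify the manipulation by expanding $E_{\alpha,\beta+1}^{c}(tz)$ in its defining power series and integrating the uniformly convergent series against the weight termwise, exactly as in the proof of Theorem~\ref{tha}.
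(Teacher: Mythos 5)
Your proposal is correct and follows exactly the route the paper indicates: the paper gives no written-out proof but states that the corollary is obtained "by using \eqref{GF} and \eqref{integral}," i.e., by substituting the Kurulay--Bayram relation into the integral representation and splitting the integral, which is precisely your argument. Your added justification of differentiating under the integral sign is a welcome extra but does not change the approach.
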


In the next theorem, we apply the Mellin transforms on \eqref{Fextended} and obtained the result in form of Wright hypergeometric function which is defined in (see \cite{Wright1935}-\cite{Wrigt1935})  by
$$_p\Psi_{q}(z)=\quad_p\Psi_{q}
                                 \left[
                                   \begin{array}{ccc}
                                     (\alpha_i, \mu_i)_{1,p} &  &  \\
                                      &  & ;z \\
                                     (\beta_j, \lambda_j)_{1,q} &  &  \\
                                   \end{array}
                                 \right]$$
\begin{eqnarray}\label{6}
&=&\sum\limits_{n=0}^{\infty}\frac{\Gamma(\alpha_{1}+\mu_{1}n)\cdots\Gamma(\alpha_{p}+\mu_{p}n)}
{\Gamma(\beta_{1}+\lambda_{1}n)\cdots
\Gamma(\beta_{q}+\lambda_{q}n)}\frac{z^{n}}{n!}
\end{eqnarray}
where $\beta_{r}$ and $\alpha_{s}$  belongs to $\mathbb{R^{+}}$ such that
\begin{eqnarray*}
1+\sum\limits_{s=1}^{q}\lambda_{s}-\sum\limits_{r=1}^{p}\mu_{r}\geq0.
\end{eqnarray*}

\begin{theorem}\label{thb}
The Mellin transform of \eqref{Fextended} given by
\begin{eqnarray}\label{Mell}
\mathfrak{M}\Big\{E_{\alpha,\beta}^{\gamma,c}(z;p,q);p\rightarrow s,q\rightarrow r\Big\}
=\frac{\Gamma(s)\Gamma(r)\Gamma(c+r-\gamma)}{\Gamma(\gamma)\Gamma(c-\gamma)}\notag\\
\times _2\Psi_2\left[
         \begin{array}{cc}
           (c,1), (\gamma+s,1), & \\
            & ,z\\
           (\beta,\gamma),(c+s+r,1),&  \\
         \end{array}
       \right]
\end{eqnarray}
where $p,q\geq0$, $\Re(c)>\Re(\gamma)>0$, and $\Re(\alpha)>0$,  $\Re(\beta)>0$.
\end{theorem}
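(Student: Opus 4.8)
The plan is to evaluate the double Mellin transform term by term against the series \eqref{Fextended}, reducing the whole problem to elementary Gamma integrals plus a single classical beta integral, and then to recognise the surviving series as a ${}_2\Psi_2$. Writing the transform explicitly as
$$\mathfrak{M}\{f;p\to s,q\to r\}=\int_0^\infty\!\!\int_0^\infty p^{s-1}q^{r-1}f(p,q)\,dp\,dq,$$
I would first interchange the summation in \eqref{Fextended} with the double integration. Since the only $p,q$-dependent factor in the $n$-th term is $\beta(\gamma+n,c-\gamma;p,q)$, the task collapses to computing the double Mellin transform of the extended beta function, with the remaining factors $(c)_n/\big(\beta(\gamma,c-\gamma)\Gamma(\alpha n+\beta)n!\big)\,z^n$ carried along as constants.

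For that core step I would insert the integral representation \eqref{EEbeta},
$$\beta(\gamma+n,c-\gamma;p,q)=\int_0^1 t^{\gamma+n-1}(1-t)^{c-\gamma-1}e^{-p/t-q/(1-t)}\,dt,$$
and interchange the order of integration so that the $p$- and $q$-integrals separate. Each is a scaled Gamma integral: the substitution $u=p/t$ gives $\int_0^\infty p^{s-1}e^{-p/t}\,dp=\Gamma(s)\,t^{s}$, and likewise the $q$-integral yields $\Gamma(r)(1-t)^{r}$. What is left is the classical beta integral $\int_0^1 t^{\gamma+n+s-1}(1-t)^{c-\gamma+r-1}\,dt$, which I would write at once in Gamma form as $\Gamma(\gamma+n+s)\Gamma(c-\gamma+r)/\Gamma(c+n+s+r)$.

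Reassembling the series, I would factor out the $n$-independent constants $\Gamma(s)\Gamma(r)\Gamma(c-\gamma+r)$ and simplify the normalisation using $\beta(\gamma,c-\gamma)=\Gamma(\gamma)\Gamma(c-\gamma)/\Gamma(c)$ together with $(c)_n=\Gamma(c+n)/\Gamma(c)$; the two copies of $\Gamma(c)$ cancel and the Pochhammer symbol becomes $\Gamma(c+n)$. This leaves
$$\frac{\Gamma(s)\Gamma(r)\Gamma(c-\gamma+r)}{\Gamma(\gamma)\Gamma(c-\gamma)}\sum_{n=0}^\infty\frac{\Gamma(c+n)\,\Gamma(\gamma+s+n)}{\Gamma(\beta+\alpha n)\,\Gamma(c+s+r+n)}\,\frac{z^n}{n!},$$
and comparison with the definition \eqref{6} identifies the sum as ${}_2\Psi_2$ with numerator pairs $(c,1),(\gamma+s,1)$ and denominator pairs $(\beta,\alpha),(c+s+r,1)$, which is \eqref{Mell} after noting $c-\gamma+r=c+r-\gamma$.

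The computation is routine; the genuine obstacle is bookkeeping rather than analysis. Two interchanges of the order of integration, and one of summation with integration, must be justified, which I would do by applying Tonelli to the manifestly nonnegative integrand and only then invoking Fubini. The one point requiring care is the matching of Wright parameters: the factor $\Gamma(\alpha n+\beta)$ in \eqref{Fextended} must be read as the denominator pair $(\beta,\alpha)$, i.e.\ $\Gamma(\beta+\alpha n)$, so that the scaling index $\alpha$ is the coefficient of $n$, and one must carry the shift in $\Gamma(c+r-\gamma)$ so that the prefactor agrees exactly with \eqref{Mell}.
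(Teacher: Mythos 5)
Your proposal is correct and follows essentially the same route as the paper: both reduce the transform to a product of two Gamma integrals (via $u=p/t$ and $v=q/(1-t)$) times a classical beta integral $\int_0^1 t^{\gamma+n+s-1}(1-t)^{c-\gamma+r-1}\,dt$, and then recognise the surviving series as a ${}_2\Psi_2$; the only cosmetic difference is that you interchange summation and integration at the outset, whereas the paper first invokes the integral representation of Theorem \ref{tha} and re-expands the series afterwards. Two minor points in your favour: your explicit Tonelli/Fubini justification is omitted in the paper, and your identification of the Wright denominator pair as $(\beta,\alpha)$ is the correct reading of $\Gamma(\alpha n+\beta)$, silently repairing what is evidently a typo $(\beta,\gamma)$ in the displayed formula \eqref{Mell}.
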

\begin{proof}
Applying the Mellin transform to \eqref{Fextended},
\begin{eqnarray}\label{Mellin}
\mathfrak{M}\Big\{E_{\alpha,\beta}^{\gamma,c}(z;p,q);p\rightarrow s,q\rightarrow r\Big\}=
\int_{0}^{\infty}\int_{0}^{\infty}p^{s-1}q^{r-1}E_{\alpha,\beta}^{\gamma,c;\lambda,\rho}(z;p,q)dpdq.
\end{eqnarray}
and using (\ref{integral}) in (\ref{Mellin}), we get
\begin{align}\label{Mellin1}
&\mathfrak{M}\Big\{E_{\alpha,\beta}^{\gamma,c}(z;p,q);p\rightarrow s,\rightarrow r\Big\}\notag\\
&=\frac{1}{\beta(\gamma,c-\gamma)}\int_0^\infty\int_{0}^{\infty} p^{s-1}q^{r-1}
\Big\{\int_0^1t^{\gamma-1}(1-t)^{c-\gamma-1}\exp\Big(-\frac{p}{t}-\frac{q}{(1-t)}\Big]\Big\}\notag\\
&\times E_{\alpha,\beta}^{c}(tz)dtdpdq.
\end{align}
Changing the order of integrations in equation (\ref{Mellin1}), we have
\begin{align}\label{Mellin2}
&\mathfrak{M}\Big\{E_{\alpha,\beta}^{\gamma,c}(z;p,q);p\rightarrow s,\rightarrow r\Big\}\notag\\
&=\frac{1}{\beta(\gamma,c-\gamma)}\int_0^1t^{\gamma-1}(1-t)^{c-\gamma-1}
E_{\alpha,\beta}^{c}(tz)\Big)\notag\\
&\times\Big[\int_0^\infty p^{s-1}\exp\Big(-\frac{p}{t}\Big)dp \int_{0}^{\infty}q^{r-1}\exp\Big(-\frac{q}{(1-t)}\Big)
dq\Big]dt.
\end{align}
Now, taking $u=\frac{p}{t}$  in  the second integral of equation (\ref{Mellin2}), we get
\begin{eqnarray*}
\int_0^\infty p^{s-1}\exp\Big(-\frac{p}{t}\Big)
dp &=&\int_0^\infty u^{s-1}t^s(1-t)^s e^{-u}du\\
&=& t^s(1-t)^s\int_0^\infty u^{s-1}e^{-u}du
\end{eqnarray*}
\begin{eqnarray}\label{Mellin3}
&=&t^s(1-t)^s\Gamma(s),
\end{eqnarray}
Similarly, taking $v=\frac{q}{1-t}$ in  the third integral of equation (\ref{Mellin2}), we have
\begin{eqnarray}\label{Mellin4}
\int_0^\infty q^{r-1}\exp\Big(-\frac{p}{1-t}\Big)
dq=(1-t)^r\Gamma(r).
\end{eqnarray}
Using equation (\ref{Mellin3}) and \eqref{a3} in equation (\ref{Mellin2}), we get
\begin{align*}
&\mathfrak{M}\Big\{E_{\alpha,\beta}^{\gamma,c}(z;p,q);p\rightarrow s,\rightarrow r\Big\}\\
&=\frac{\Gamma(s)\Gamma(r)}{\beta(\gamma,c-\gamma)}\int_0^1t^{\gamma+n+s-1}(1-t)^{c+r-\gamma-1}
\sum\limits_{n=0}^{\infty}\frac{(c)_nz^n}{\Gamma(\alpha n+\beta)n!}
dt
\end{align*}
which gives,
\begin{align*}
\mathfrak{M}\Big\{E_{\alpha,\beta}^{\gamma,c}(z;p,q);p\rightarrow s,\rightarrow r\Big\}&=\frac{\Gamma(s)\Gamma(r)}{\beta(\gamma,c-\gamma)}\sum\limits_{n=0}^{\infty}\frac{(c)_nz^n}{\Gamma(\alpha n+\beta)n!}\frac{\Gamma(\gamma+n+s)\Gamma(c+r-\gamma)}{\Gamma(c+n+s+r)}\\
&=\frac{\Gamma(s)\Gamma(r)\Gamma(c+r-\gamma)}{\Gamma(\gamma)\Gamma(c-\gamma)}
\sum\limits_{n=0}^{\infty}\frac{\Gamma(c+n)z^n}{\Gamma(\alpha n+\beta)n!}\frac{\Gamma(\gamma+n+s)}{\Gamma(c+n+s+r)}\\
&=\frac{\Gamma(s)\Gamma(r)\Gamma(c+r-\gamma)}{\Gamma(\gamma)\Gamma(c-\gamma)}{_2\Psi_2}\left[
         \begin{array}{cc}
           (c,1), (\gamma+s,1), & \\
            & ,z\\
           (\beta,\gamma),(c+s+r,1),&  \\
         \end{array}
       \right]
\end{align*}
which is the proof.
\end{proof}
\begin{corollary}\label{cor1}
Putting $s=r=1$ in Theorem \ref{thb}, we have the following result
\begin{eqnarray}
\int_{0}^{\infty}E_{\alpha,\beta}^{\gamma,c}(z;p,q)dpdq
=\frac{\Gamma(c+1-\gamma)}{\Gamma(\gamma)\Gamma(c-\gamma)}
._2\Psi_2\left[
         \begin{array}{cc}
           (c,1), (\gamma+1,1), & \\
            & ,z\\
           (\beta,\gamma),(c+2,1),&  \\
         \end{array}
       \right]
\end{eqnarray}
\end{corollary}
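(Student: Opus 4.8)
The plan is to obtain Corollary \ref{cor1} as an immediate specialization of Theorem \ref{thb}, since no fresh analysis is required once the transform parameters are set to unity. First I would recall that the double Mellin transform occurring in Theorem \ref{thb} is, by its defining formula \eqref{Mellin}, nothing but the iterated integral $\int_0^\infty\int_0^\infty p^{s-1}q^{r-1}E_{\alpha,\beta}^{\gamma,c}(z;p,q)\,dp\,dq$. The entire content of the corollary is the observation that this reduces to the unweighted double integral when $s=r=1$.

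Next I would put $s=r=1$ in both sides of \eqref{Mell}. On the left-hand side the weight factors collapse, since $p^{s-1}=p^{0}=1$ and $q^{r-1}=q^{0}=1$, leaving exactly the double integral $\int_0^\infty\int_0^\infty E_{\alpha,\beta}^{\gamma,c}(z;p,q)\,dp\,dq$ displayed in the statement. On the right-hand side I would feed $s=r=1$ into the closed form of Theorem \ref{thb}: the identity $\Gamma(1)=1$ removes the two factors $\Gamma(s)$ and $\Gamma(r)$, the prefactor $\Gamma(c+r-\gamma)$ becomes $\Gamma(c+1-\gamma)$, and the Wright parameters update as $(\gamma+s,1)\to(\gamma+1,1)$ and $(c+s+r,1)\to(c+2,1)$, while $(c,1)$ and $(\beta,\gamma)$ remain unchanged. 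Collecting these factors yields precisely the asserted ${}_2\Psi_2$ expression.

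Because every step is a direct evaluation of already-established quantities at $s=r=1$, I do not anticipate a genuine obstacle; the argument is purely a substitution into Theorem \ref{thb}. The only point meriting a moment of care is admissibility of the parameters: the hypothesis $\Re(c)>\Re(\gamma)>0$ guarantees $\Re(c+1-\gamma)>0$, so the Gamma arguments stay valid, and the convergence condition for the ${}_2\Psi_2$ series, namely $1+\gamma+1-(1+1)=\gamma$ with $\Re(\gamma)>0$, is inherited unchanged from Theorem \ref{thb} (it does not depend on $s$ or $r$). Beyond this routine book-keeping the corollary follows at once.
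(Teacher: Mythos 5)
Your proposal is correct and is exactly the route the paper takes: the corollary is obtained purely by setting $s=r=1$ in Theorem \ref{thb}, so the weights $p^{s-1}q^{r-1}$ collapse to $1$ on the left and $\Gamma(s)=\Gamma(r)=1$ on the right, with the Wright parameters specializing as you describe. Your added remark on parameter admissibility is harmless book-keeping that the paper omits.
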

\begin{corollary}
Taking $p=q$ in Corollary \ref{cor1}, we get the following result of extended Mittag-Leffler(ML) function defined by \cite{OY}
\begin{eqnarray}
\int_{0}^{\infty}E_{\alpha,\beta}^{\gamma,c}(z;p)dp
=\frac{\Gamma(c+1-\gamma)}{\Gamma(\gamma)\Gamma(c-\gamma)}
{_2\Psi_2}\left[
         \begin{array}{cc}
           (c,1), (\gamma+1,1), & \\
            & ,z\\
           (\beta,\gamma),(c+2,1),&  \\
         \end{array}
       \right]
\end{eqnarray}
\end{corollary}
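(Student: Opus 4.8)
The plan is to read the corollary correctly before proving it: one cannot literally set $p=q$ inside the double integral of Corollary~\ref{cor1}, since that would confine the integration to the \emph{diagonal}, a set of measure zero. The substantive content is instead the identity $\int_0^\infty E_{\alpha,\beta}^{\gamma,c}(z;p)\,dp=\int_0^\infty\!\!\int_0^\infty E_{\alpha,\beta}^{\gamma,c}(z;p,q)\,dp\,dq$, where $E_{\alpha,\beta}^{\gamma,c}(z;p):=E_{\alpha,\beta}^{\gamma,c}(z;p,p)$ is the single-parameter extended ML function of \cite{OY}. Once this identity is in hand, Corollary~\ref{cor1} supplies the right-hand side verbatim and we are done, so the whole task reduces to showing that the diagonal single integral equals the full double integral.

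First I would record that $p=q$ does collapse \eqref{Fextended} onto the \cite{OY} function: in the integral representation \eqref{integral} the two exponential factors merge, via $\tfrac1t+\tfrac1{1-t}=\tfrac1{t(1-t)}$, into $\exp\!\big(-\tfrac{p}{t(1-t)}\big)$, exactly the kernel of $\beta_p$ in \eqref{dd}; hence $E_{\alpha,\beta}^{\gamma,c}(z;p,p)=E_{\alpha,\beta}^{\gamma,c}(z;p)$. I would then evaluate each side of the identity by substituting \eqref{integral}, using Fubini to bring the $p$- (and $q$-) integrations inside, and integrating out the exponential. For the double integral the two one-dimensional integrals give $\int_0^\infty e^{-p/t}\,dp=t$ and $\int_0^\infty e^{-q/(1-t)}\,dq=1-t$, contributing the factor $t(1-t)$; for the diagonal single integral the merged kernel gives $\int_0^\infty e^{-p/(t(1-t))}\,dp=t(1-t)$, the very same factor.

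Consequently both sides collapse to the identical $t$-integral $\tfrac{1}{\beta(\gamma,c-\gamma)}\int_0^1 t^{\gamma}(1-t)^{c-\gamma}E_{\alpha,\beta}^c(tz)\,dt$, so the two integrals agree and the corollary follows at once from Corollary~\ref{cor1}. Should a self-contained series be preferred, one may instead expand $E_{\alpha,\beta}^c(tz)$ by \eqref{a3}, interchange sum and integral, and evaluate $\int_0^1 t^{\gamma+n}(1-t)^{c-\gamma}\,dt$; after writing $(c)_n=\Gamma(c+n)/\Gamma(c)$ and $\beta(\gamma,c-\gamma)=\Gamma(\gamma)\Gamma(c-\gamma)/\Gamma(c)$ this yields $\tfrac{\Gamma(c+1-\gamma)}{\Gamma(\gamma)\Gamma(c-\gamma)}\sum_{n\ge0}\tfrac{\Gamma(c+n)\Gamma(\gamma+1+n)}{\Gamma(\alpha n+\beta)\Gamma(c+2+n)}\tfrac{z^n}{n!}$, which is the Wright function on the right-hand side of Corollary~\ref{cor1} by \eqref{6}.

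The only real obstacle is the conceptual one flagged at the start: recognizing that ``taking $p=q$'' must mean passing to the diagonal single integral, not substituting into the double integral, together with the small arithmetic coincidence that the double and diagonal kernels integrate to the same factor $t(1-t)$. The lone analytic ingredient, the Fubini interchange, is routine, since on $0<t<1$ the integrand is dominated by an integrable function (using $\Re(c)>\Re(\gamma)>0$ and the entirety of $E_{\alpha,\beta}^c$), so absolute convergence holds throughout.
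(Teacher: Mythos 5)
Your proof is correct, but it takes a genuinely different --- and more rigorous --- route than the paper, which offers no argument at all: the corollary is ``proved'' by the bare instruction to take $p=q$ in Corollary \ref{cor1}. As you rightly flag, that substitution is literally ill-defined, since in Corollary \ref{cor1} the symbols $p$ and $q$ are integration variables of a double integral, and restricting to the diagonal changes nothing on a set of measure zero. The paper's intended reading is merely that the $s=1$ Mellin evaluation for the one-parameter function of \"{O}zarslan and Y{\i}lmaz produces the same right-hand side (a computation essentially contained in \cite{OY} and in the proof of Theorem \ref{thb} with the kernel $e^{-p/(t(1-t))}$ in place of $e^{-p/t-q/(1-t)}$). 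Your proposal instead isolates and proves the actual content, the identity $\int_0^\infty E_{\alpha,\beta}^{\gamma,c}(z;p)\,dp=\int_0^\infty\!\int_0^\infty E_{\alpha,\beta}^{\gamma,c}(z;p,q)\,dp\,dq$, via the correct observation that $\int_0^\infty e^{-p/(t(1-t))}\,dp=t(1-t)=\bigl(\int_0^\infty e^{-p/t}\,dp\bigr)\bigl(\int_0^\infty e^{-q/(1-t)}\,dq\bigr)$, so that both sides collapse to $\frac{1}{\beta(\gamma,c-\gamma)}\int_0^1 t^{\gamma}(1-t)^{c-\gamma}E_{\alpha,\beta}^{c}(tz)\,dt$; your fallback beta-integral series computation is exactly the direct proof the paper would give if it gave one. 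What your route buys is a genuine justification of the statement as printed, plus an explanation of why a single integral of the diagonal function equals a double integral of the two-parameter function --- a point the paper leaves entirely implicit. One caveat you should state explicitly: your final series carries $\Gamma(\alpha n+\beta)$ in the denominator, so by \eqref{6} the Wright parameter pair must be $(\beta,\alpha)$ rather than the $(\beta,\gamma)$ printed in the corollary (a typographical slip inherited from Theorem \ref{thb}); your closing claim that the series ``is'' the displayed ${}_2\Psi_2$ holds only after that correction.
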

\begin{corollary}
 The inverse Mellin transform  of equation (\ref{Mell}) yields the following  integral representation

\begin{eqnarray*}
E_{\alpha,\beta}^{\gamma,c}(z;p,q)=\frac{1}{(2\pi\iota)^2\Gamma(\gamma)\Gamma(c-\gamma)}\int_{\mu-\iota\infty}^{\mu+\iota\infty}\int_{\nu-\iota\infty}^{\nu+\iota\infty}
\Gamma(s)\Gamma(r)\Gamma(c+r-\gamma)
\end{eqnarray*}
\begin{eqnarray}
\times{_2\Psi_2}\left[
         \begin{array}{cc}
           (c,1), (\gamma+s,1), & \\
            & ,z\\
           (\beta,\gamma),(c+s+r,1),&  \\
         \end{array}
       \right]p^{-s}q^{-r}dsdr,
\end{eqnarray}
where $\mu, \nu>0$, $\Re(p)>0$ and $\Re(q)>0$.
\end{corollary}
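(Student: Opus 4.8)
The plan is to obtain the stated representation as a direct application of the two-dimensional Mellin inversion theorem, using the forward transform already computed in Theorem \ref{thb}. Writing $F(s,r)$ for the entire right-hand side of \eqref{Mell}, that theorem asserts precisely that
$$\mathfrak{M}\Big\{E_{\alpha,\beta}^{\gamma,c}(z;p,q);p\to s,q\to r\Big\}=F(s,r),$$
so by definition $E_{\alpha,\beta}^{\gamma,c}(z;p,q)$ is the function whose iterated Mellin transform in the variables $p$ and $q$ equals $F(s,r)$. Inverting this single relation is the whole content of the corollary.

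Concretely, I would recall the inversion formula for the double Mellin transform: if $F(s,r)=\int_0^\infty\int_0^\infty p^{s-1}q^{r-1}f(p,q)\,dp\,dq$, then $f(p,q)=\frac{1}{(2\pi\iota)^2}\int_{\mu-\iota\infty}^{\mu+\iota\infty}\int_{\nu-\iota\infty}^{\nu+\iota\infty}F(s,r)\,p^{-s}q^{-r}\,ds\,dr$, the contours being the vertical lines $\Re(s)=\mu$ and $\Re(r)=\nu$ chosen inside the strip of analyticity of $F$. Substituting the explicit $F(s,r)$ from \eqref{Mell} — whose $s,r$-dependent prefactor is $\frac{\Gamma(s)\Gamma(r)\Gamma(c+r-\gamma)}{\Gamma(\gamma)\Gamma(c-\gamma)}$ multiplying the ${}_2\Psi_2$ factor — and pulling the constant $\frac{1}{\Gamma(\gamma)\Gamma(c-\gamma)}$ out front alongside $\frac{1}{(2\pi\iota)^2}$, reproduces the displayed double contour integral verbatim.

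The substantive point to verify, rather than to grind through, is the admissibility of the contours. The $s,r$-dependence of $F$ enters through $\Gamma(s)$, $\Gamma(r)$, $\Gamma(c+r-\gamma)$ and through the Wright function ${}_2\Psi_2$, whose numerator parameter $\gamma+s$ and denominator parameter $c+s+r$ shift with $s$ and $r$. Since $\Gamma(s)$ and $\Gamma(r)$ have their poles only at the non-positive integers, choosing $\mu>0$ and $\nu>0$ places both contours to the right of these poles, which is exactly the region $\Re(s)>0$, $\Re(r)>0$ in which the forward transform of Theorem \ref{thb} was computed (the evaluations $\int_0^\infty p^{s-1}e^{-p/t}\,dp=t^s\Gamma(s)$ and its $q$-analogue require precisely these half-planes). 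The hypotheses $\Re(p)>0$, $\Re(q)>0$ then ensure that $p^{-s}q^{-r}$ supplies the decay along the vertical lines needed for absolute convergence and for Fubini to legitimise the iterated inversion. Granting these standard Mellin-analytic conditions — analyticity of $F(s,r)$ in the product strip and suitable decay as $|\Im s|,|\Im r|\to\infty$ — the inversion theorem applies and the stated representation follows at once; the main obstacle is thus the bookkeeping of the pole structure confirming that $\mu,\nu>0$ is a legitimate choice, not any genuinely new computation.
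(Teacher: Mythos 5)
Your proposal is correct and matches the paper's (implicit) argument exactly: the paper states this corollary without proof, as an immediate application of the two-dimensional Mellin inversion formula to Theorem \ref{thb}, which is precisely what you carry out. Your additional verification that the poles of $\Gamma(s)$, $\Gamma(r)$, $\Gamma(c+r-\gamma)$ and of the shifted Wright parameters all lie to the left of the contours $\Re(s)=\mu>0$, $\Re(r)=\nu>0$ (using $\Re(c)>\Re(\gamma)>0$) supplies justification the paper omits, though note the decay along the vertical lines comes from the exponential decay of $\Gamma(s)\Gamma(r)$ in $|\Im s|,|\Im r|$ combined with $|\arg p|,|\arg q|<\pi/2$, rather than from $p^{-s}q^{-r}$ alone.
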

\section{Derivative properties and extended Mittag-Leffler(ML) function }
In this section, we present fractional derivatives by using the extended  Riemann-Liouville (R-L) fractional derivative and some related properties of \eqref{Fextended}.
\begin{definition}\label{defa}
The well-known R-L fractional derivative of order $\lambda$ is defined by
\begin{eqnarray}
\mathfrak{D}_{x}^{\lambda}\Big\{f(z)\Big\}=\frac{1}{\Gamma(-\lambda)}\int_0^xf(\tau)(x-\tau)^{-\lambda-1}d\tau, \Re(\lambda)>0.
\end{eqnarray}
If $m-1<\Re(\lambda)<m$ where $m=1,2,\cdots$, then it becomes
\begin{eqnarray*}
\mathfrak{D}_{x}^{\lambda}\Big\{f(z)\Big\}=\frac{d^m}{dx^m}\mathfrak{D}_{x}^{\lambda-m}\Big\{f(z)\Big\}
\end{eqnarray*}
\begin{eqnarray}
=\frac{d^m}{dx^m}\Big\{\frac{1}{\Gamma(-\lambda+m)}\int_0^xf(\tau)(x-\tau)^{-\lambda+m-1}d\tau\Big\}, \Re(\lambda)>0.
\end{eqnarray}
\end{definition}
\begin{definition}\label{defb}(see \cite{Ozerslan})
The extended R-L fractional derivative of order $\lambda$ is defined by
\begin{eqnarray}
\mathfrak{D}_{x}^{\lambda}\Big\{f(z);p\Big\}=\frac{1}{\Gamma(-\lambda)}\int_0^xf(\tau)(x-\tau)^{-\lambda-1}
\exp\Big(-\frac{px^2}{\tau(x-\tau)}\Big) d\tau, \Re(\lambda)>0.
\end{eqnarray}
if $m-1<\Re(\lambda)<m$ where $m=1,2,\cdots$, then it follows
\begin{eqnarray*}
\mathfrak{D}_{x}^{\lambda}\Big\{f(z);p,q\Big\}=\frac{d^m}{dx^m}\mathfrak{D}_{x}^{\lambda-m}\Big\{f(z);p,q\Big\}
\end{eqnarray*}
\begin{eqnarray}
=\frac{d^m}{dx^m}\Big\{\frac{1}{\Gamma(-\lambda+m)}\int_0^xf(\tau)(x-\tau)^{-\lambda+m-1}\exp\Big(-\frac{px^2}{\tau(x-\tau)}\Big)
 d\tau\Big\}, \Re(\lambda)>0.
\end{eqnarray}
\end{definition}
Recently, Baleanu et al. \cite{Baleanu2017} give the extension of R-L as
\begin{definition}\label{defc}
\begin{eqnarray}\label{Efrac}
\mathfrak{D}_{x}^{\lambda}\Big\{f(z);p,q\Big\}=\frac{1}{\Gamma(-\lambda)}\int_0^xf(\tau)(x-\tau)^{-\lambda-1}
\exp\Big(-\frac{px}{\tau}-\frac{qx}{(x-\tau)}\Big) d\tau,
\end{eqnarray}
where $\Re(\lambda)>0$, $\Re(p)>0$ and $\Re(q)>0$.
If  $m-1<\Re(\lambda)<m$ where $m=1,2,\cdots$, it follows
\begin{eqnarray*}
\mathfrak{D}_{x}^{\lambda}\Big\{f(z);p,q\Big\}=\frac{d^m}{dx^m}\mathfrak{D}_{x}^{\lambda-m}\Big\{f(z);p,q\Big\}
\end{eqnarray*}
\begin{eqnarray}
=\frac{d^m}{dx^m}\Big\{\frac{1}{\Gamma(-\lambda+m)}\int_0^xf(\tau)(x-\tau)^{-\lambda+m-1}
\exp\Big(-\frac{px}{\tau}-\frac{qx}{(x-\tau)}\Big)d\tau\Big\}, \Re(\lambda)>0.
\end{eqnarray}
\end{definition}
Obviously, if $p=q$, then definition \ref{defc} reduces to extended fractional derivative \ref{defb}. Similarly, if  $p=0=q$, then definition \ref{defc} reduces to R-L fractional derivative \ref{defa}.
\begin{theorem}
Let $p,q\geq0$, $\Re(\lambda)>\Re(\delta)>0$, $\Re(\alpha)>0$, and $\Re(\beta)>0$. Then
\begin{eqnarray}
\mathfrak{D}_{z}^{\delta-\lambda}\Big\{z^{\delta-1}E_{\alpha,\beta}^{c}(z);p,q\Big\}=\frac{z^{\lambda-1}\beta(\delta,c-\delta)}
{\Gamma(\mu-\delta)}E_{\alpha,\beta}^{\delta,\lambda}(z;p,q)
\end{eqnarray}
\end{theorem}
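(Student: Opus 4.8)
The plan is to insert the extended Riemann--Liouville derivative of Definition \ref{defc} (equation \eqref{Efrac}) directly, expand the Prabhakar function \eqref{a3} as a power series, and collapse the resulting kernel integral to an extended beta integral by a single scaling substitution. The first thing I would check is the effect of the negative order. The operator carries order $\delta-\lambda$, and since $\Re(\lambda)>\Re(\delta)$ we have $\Re(\delta-\lambda)<0$, so $-(\delta-\lambda)=\lambda-\delta$ has positive real part, $\Gamma(\lambda-\delta)$ is finite, and the kernel factor $(z-\tau)^{-(\delta-\lambda)-1}=(z-\tau)^{\lambda-\delta-1}$ is integrable at $\tau=z$. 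Hence the computation stays in the single-integral branch of \eqref{Efrac} and never needs the $d^m/dx^m$ form; this is what makes the stated prefactor $\Gamma(\mu-\delta)$ (with $\mu=\lambda$) the correct normalizing constant.

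I would then write $z^{\delta-1}E_{\alpha,\beta}^{\lambda}(z)=\sum_{n\ge 0}\frac{(\lambda)_n}{\Gamma(\alpha n+\beta)\,n!}\,z^{n+\delta-1}$, where the upper Prabhakar parameter is read as $\lambda$ (i.e. $c=\lambda$) so that the Pochhammer symbols coincide with those of $E_{\alpha,\beta}^{\delta,\lambda}$ in \eqref{Fextended}. Feeding this into \eqref{Efrac} and interchanging the (uniformly convergent) sum with the integral produces, for each $n$, the integral $\int_0^z \tau^{n+\delta-1}(z-\tau)^{\lambda-\delta-1}\exp(-pz/\tau-qz/(z-\tau))\,d\tau$. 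The decisive step is the substitution $\tau=zt$: it maps $[0,z]$ onto $[0,1]$, sends $z-\tau$ to $z(1-t)$ and the exponential to $\exp(-p/t-q/(1-t))$, and factors out $z^{n+\lambda-1}$, leaving precisely $\beta(\delta+n,\lambda-\delta;p,q)$ by \eqref{EEbeta}.

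Reassembling, the transform equals $\frac{z^{\lambda-1}}{\Gamma(\lambda-\delta)}\sum_{n\ge 0}\frac{\beta(\delta+n,\lambda-\delta;p,q)\,(\lambda)_n}{\Gamma(\alpha n+\beta)\,n!}z^n$; multiplying and dividing by $\beta(\delta,\lambda-\delta)$ and comparing with \eqref{Fextended} (with $\gamma=\delta$, $c=\lambda$) identifies the series as $\beta(\delta,\lambda-\delta)\,E_{\alpha,\beta}^{\delta,\lambda}(z;p,q)$, which gives the claimed identity. I expect the only genuine obstacle to be the bookkeeping around the negative order in the first paragraph --- in particular keeping straight which $\Gamma$-factor and which beta parameters appear --- while the scaling reduction and the term-by-term integration (which requires only the standard compact-convergence justification for the Prabhakar series) are routine.
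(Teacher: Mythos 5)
Your proof is correct and follows essentially the same route as the paper's: both hinge on the substitution $\tau=zt$, which converts the extended Riemann--Liouville kernel into the extended beta kernel on $[0,1]$; the paper then simply compares the result with the integral representation of Theorem \ref{tha}, whereas you unfold that representation termwise from the Prabhakar series, which is the same computation (indeed it is how Theorem \ref{tha} itself is proved). Your reading of the statement's notational slips --- taking the Prabhakar superscript to be $\lambda$ (so the beta prefactor is $\beta(\delta,\lambda-\delta)$) and $\mu=\lambda$ in the $\Gamma$-factor --- is exactly what the paper's own proof implicitly assumes.
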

\begin{proof}
Setting $\lambda$ by $\delta-\lambda$ in definition  (\ref{Efrac}), we have
$$\mathfrak{D}_{z}^{\delta-\lambda}\Big\{z^{\delta-1}E_{\alpha,\beta}^{c}(z);p,q\Big\}$$
\begin{eqnarray*}
&=&\frac{1}{\Gamma(\lambda-\delta)}\int_0^z\tau^{\delta-1}E_{\alpha,\beta}^{c}(\tau)(z-\tau)^{-\delta+\mu-1}
\exp\Big(-\frac{pz}{\tau}-\frac{qz}{(z-\tau)}\Big)d\tau\\
&=&\frac{z^{-\delta+\mu-1}}{\Gamma(\mu-\delta)}\int_0^z\tau^{\delta-1}E_{\alpha,\beta}^{c}(\tau)(1-\frac{\tau}{z})^{-\delta+\lambda-1}
\exp\Big(-\frac{pz}{\tau}-\frac{qz}{(z-\tau)}\Big) d\tau.
\end{eqnarray*}
Taking $u=\frac{\tau}{z}$ in above equation, we have
$$\mathfrak{D}_{z}^{\delta-\mu}\Big\{z^{\delta-1}E_{\alpha,\beta}^{c}(z);p,q\Big\}$$
\begin{multline}\label{Frac}
=\frac{z^{\mu-1}}{\Gamma(\mu-\delta)}\int_0^z u^{\delta-1}(1-u)^{-\delta+\lambda-1}\exp\Big(-\frac{p}{u}-\frac{q}{(1-u)}\Big) E_{\alpha,\beta}^{c}(uz)du.
\end{multline}
Comparing equation (\ref{Frac}) with equation (\ref{integral}), then completes the proof.
\end{proof}
Now, we define the following  derivative properties of extended $(p,q)$-Mittag-Leffler function.
\begin{theorem}
  The following derivative formula holds for the extended Mittag-Leffler(ML) function,
\begin{eqnarray}\label{der}
\frac{d^n}{dz^n}\Big\{E_{\alpha,\beta}^{\gamma,c}(z;p,q)\Big\}
=(c)_nE_{\alpha,\beta+n\alpha}^{\gamma+n,c+ n}(z;p,q).
\end{eqnarray}
\end{theorem}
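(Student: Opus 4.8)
The plan is to establish the first-order case $n=1$ directly from the series definition \eqref{Fextended} and then obtain the general formula by induction on $n$. The organizing principle is that each differentiation should simultaneously raise three of the four parameters as $\gamma\mapsto\gamma+1$, $c\mapsto c+1$, $\beta\mapsto\beta+\alpha$ (with $\alpha$ fixed), and this is precisely the structure that term-by-term differentiation of the defining power series produces.

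First I would differentiate \eqref{Fextended} term by term. This is legitimate because the series converges locally uniformly in $z$: since $p,q\geq0$ we have $\beta(\gamma+k,c-\gamma;p,q)\leq\beta(\gamma+k,c-\gamma)$, which decays polynomially in $k$, while the factor $1/\Gamma(\alpha k+\beta)$ decays super-exponentially for $\Re(\alpha)>0$, so \eqref{Fextended} represents an entire function of $z$. The $k=0$ term is constant and drops out, and after reindexing $k\mapsto k+1$ the derivative becomes
\begin{equation*}
\frac{d}{dz}E_{\alpha,\beta}^{\gamma,c}(z;p,q)=\sum_{k=0}^{\infty}\frac{\beta(\gamma+k+1,c-\gamma;p,q)}{\beta(\gamma,c-\gamma)}\,\frac{(c)_{k+1}}{\Gamma(\alpha k+\alpha+\beta)}\,\frac{z^{k}}{k!}.
\end{equation*}
At this point I would apply the Pochhammer identity $(c)_{k+1}=c\,(c+1)_k$, observe that $\Gamma(\alpha k+\alpha+\beta)=\Gamma(\alpha k+(\beta+\alpha))$ encodes the shift $\beta\mapsto\beta+\alpha$, and note that the numerator factor $\beta(\gamma+k+1,c-\gamma;p,q)$ is exactly the extended beta weight occurring in $E_{\alpha,\beta+\alpha}^{\gamma+1,c+1}$, because $(c+1)-(\gamma+1)=c-\gamma$.

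The step I expect to be the crux is reconciling the normalization constant. The differentiated series still carries the factor $1/\beta(\gamma,c-\gamma)$, whereas the target function $E_{\alpha,\beta+\alpha}^{\gamma+1,c+1}(z;p,q)$ is normalized by $1/\beta(\gamma+1,c-\gamma)$; these do not agree, and the discrepancy must be pushed into the prefactor using the classical beta identity $\beta(\gamma+1,c-\gamma)=\frac{\Gamma(\gamma+1)\Gamma(c-\gamma)}{\Gamma(c+1)}=\frac{\gamma}{c}\,\beta(\gamma,c-\gamma)$. Tracking this normalization ratio together with the factor produced by $(c)_{k+1}=c\,(c+1)_k$ is the delicate bookkeeping that pins down the constant in the first-order relation $\frac{d}{dz}E_{\alpha,\beta}^{\gamma,c}(z;p,q)=(\mathrm{const})\,E_{\alpha,\beta+\alpha}^{\gamma+1,c+1}(z;p,q)$; I would carry these factors explicitly rather than absorb them loosely, since it is exactly here that the Pochhammer prefactor in \eqref{der} is determined.

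With the first-order relation secured, I would finish by induction on $n$. Applying the relation to its own right-hand side raises $\gamma$ and $c$ by one and $\beta$ by $\alpha$ at each stage, while the per-step constants multiply; collecting the telescoping product of these constants over the $n$ successive differentiations assembles the Pochhammer prefactor displayed in \eqref{der}. Because every step merely re-applies the same base identity with uniformly shifted parameters, the inductive step is routine once the $n=1$ case and its constant have been verified.
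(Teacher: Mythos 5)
Your plan is methodologically sound and in fact more careful than the paper's own proof, which simply asserts the first-order identity $\frac{d}{dz}E_{\alpha,\beta}^{\gamma,c}(z;p,q)=c\,E_{\alpha,\beta+\alpha}^{\gamma+1,c+1}(z;p,q)$ without computation and then iterates. You correctly isolate the crux: the differentiated series is still normalized by $1/\beta(\gamma,c-\gamma)$, while the target $E_{\alpha,\beta+\alpha}^{\gamma+1,c+1}(z;p,q)$ is normalized by $1/\beta(\gamma+1,c-\gamma)$. The gap in your proposal is that you never actually evaluate the constant this bookkeeping produces, while still asserting it will reproduce the prefactor in the statement --- and when the computation is carried out, it does not. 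From your displayed series,
\[
\frac{d}{dz}E_{\alpha,\beta}^{\gamma,c}(z;p,q)
= c\,\frac{\beta(\gamma+1,c-\gamma)}{\beta(\gamma,c-\gamma)}\,E_{\alpha,\beta+\alpha}^{\gamma+1,c+1}(z;p,q)
= c\cdot\frac{\gamma}{c}\,E_{\alpha,\beta+\alpha}^{\gamma+1,c+1}(z;p,q)
= \gamma\,E_{\alpha,\beta+\alpha}^{\gamma+1,c+1}(z;p,q),
\]
so the first-order constant is $\gamma$, not $c$. Iterating, or computing the $n$-th derivative directly using $(c)_{k+n}=(c)_n(c+n)_k$ together with $\beta(\gamma+n,c-\gamma)/\beta(\gamma,c-\gamma)=(\gamma)_n/(c)_n$ (which exactly cancels the $(c)_n$), yields
\[
\frac{d^n}{dz^n}E_{\alpha,\beta}^{\gamma,c}(z;p,q)=(\gamma)_n\,E_{\alpha,\beta+n\alpha}^{\gamma+n,c+n}(z;p,q),
\]
not $(c)_n\,E_{\alpha,\beta+n\alpha}^{\gamma+n,c+n}(z;p,q)$; since $\Re(c)>\Re(\gamma)$, these genuinely differ. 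The paper's proof omits precisely the normalization step you flagged, which is how it arrives at $(c)_n$. So: finish the computation you outlined, and either correct the prefactor in the theorem to $(\gamma)_n$ or record explicitly that the formula with $(c)_n$ is incompatible with the normalization built into the definition \eqref{Fextended}.
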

\begin{proof}
Taking derivative of equation (\ref{Fextended}) respect to $z$, then we have
\begin{eqnarray}\label{der1}
\frac{d}{dz}\Big\{E_{\alpha,\beta}^{\gamma,c}(z;p,q)\Big\}
=cE_{\alpha,\beta+\alpha}^{\gamma+1,c+ 1}(z;p,q).
\end{eqnarray}
Again taking derivative of equation (\ref{der1}), respect to $z$, we have
\begin{eqnarray}
\frac{d^2}{dz^2}\Big\{E_{\alpha,\beta}^{\gamma,c}(z;p,q)\Big\}
=c(c+1)E_{\alpha,\beta+2\alpha}^{\gamma+2,c+ 2}(z;p,q).
\end{eqnarray}
Continuing in this way up to $n$, we obtain the result.
\end{proof}
\begin{theorem}
 Following differentiation formula hold true:
\begin{eqnarray}
\frac{d^n}{dz^n}\Big\{z^{\beta-1}E_{\alpha,\beta}^{\gamma,c}(\mu z^\alpha;p,q)\Big\}
=z^{\beta-n-1}E_{\alpha,\beta-n}^{\gamma+n,c+ n}(\mu z^\alpha;p,q).
\end{eqnarray}
\end{theorem}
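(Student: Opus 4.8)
The plan is to prove the identity by differentiating the defining power series term by term. First I would insert the series \eqref{Fextended} evaluated at $\mu z^\alpha$ and multiply through by $z^{\beta-1}$, giving
\[
z^{\beta-1}E_{\alpha,\beta}^{\gamma,c}(\mu z^\alpha;p,q)=\sum_{k=0}^{\infty}\frac{\beta(\gamma+k,c-\gamma;p,q)}{\beta(\gamma,c-\gamma)}\,\frac{(c)_k\,\mu^k}{\Gamma(\alpha k+\beta)\,k!}\,z^{\alpha k+\beta-1}.
\]
Since $e^{-p/t-q/(1-t)}\le 1$, the extended beta factors are dominated by the classical values $\beta(\gamma+k,c-\gamma)$, which grow only polynomially in $k$, while the factor $1/\Gamma(\alpha k+\beta)$ decays faster than any geometric rate for $\Re(\alpha)>0$; hence the series and all of its termwise derivatives converge uniformly on compact subsets of the region where $z^{\alpha k+\beta-1}$ is single-valued (say $z>0$), which legitimises term-by-term differentiation.

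The key step is to apply the generalized power rule $\dfrac{d^n}{dz^n}z^{s}=\dfrac{\Gamma(s+1)}{\Gamma(s+1-n)}\,z^{s-n}$ to each monomial $z^{\alpha k+\beta-1}$. This produces the factor $\dfrac{\Gamma(\alpha k+\beta)}{\Gamma(\alpha k+\beta-n)}$, and the decisive cancellation is that its numerator $\Gamma(\alpha k+\beta)$ exactly removes the $\Gamma(\alpha k+\beta)$ already sitting in the denominator of the $k$-th coefficient. What survives is $1/\Gamma(\alpha k+\beta-n)$ multiplying $z^{\alpha k+\beta-n-1}$. Pulling $z^{\beta-n-1}$ out of the sum and reassembling $(\mu z^\alpha)^k$ then recasts the differentiated series as an extended $(p,q)$-Mittag-Leffler series whose lower index has been lowered from $\beta$ to $\beta-n$, while the parameters governing the numerator coefficients are carried through untouched.

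The final step is to identify this series with the stated right-hand side, and I expect this parameter bookkeeping to be the main obstacle. The surviving $k$-th coefficient is
\[
\frac{\beta(\gamma+k,c-\gamma;p,q)}{\beta(\gamma,c-\gamma)}\,\frac{(c)_k}{\Gamma(\alpha k+\beta-n)},
\]
whereas \eqref{Fextended} assigns to $E_{\alpha,\beta-n}^{\gamma+n,c+n}(\mu z^\alpha;p,q)$ the coefficient
\[
\frac{\beta(\gamma+n+k,c-\gamma;p,q)}{\beta(\gamma+n,c-\gamma)}\,\frac{(c+n)_k}{\Gamma(\alpha k+\beta-n)},
\]
where I used $(c+n)-(\gamma+n)=c-\gamma$. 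Matching the two therefore hinges on the relation
\[
\frac{\beta(\gamma+k,c-\gamma;p,q)}{\beta(\gamma,c-\gamma)}\,(c)_k=\frac{\beta(\gamma+n+k,c-\gamma;p,q)}{\beta(\gamma+n,c-\gamma)}\,(c+n)_k,
\]
between the extended beta factors and the Pochhammer symbols. Establishing this identity for every $k$ is the crux on which the whole formula rests, and it is where I would concentrate the argument; once it is in hand, comparison with \eqref{Fextended} closes the proof, the routine steps being exactly the differentiation and cancellation described above.
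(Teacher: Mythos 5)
Your strategy---inserting the defining series, applying the generalized power rule, and cancelling $\Gamma(\alpha k+\beta)$---is sound, and it is more explicit than the paper's own one-line proof (which merely invokes the preceding derivative formula \eqref{der}). But the step you defer as ``the crux,'' namely
\[
\frac{\beta(\gamma+k,c-\gamma;p,q)}{\beta(\gamma,c-\gamma)}\,(c)_k=\frac{\beta(\gamma+n+k,c-\gamma;p,q)}{\beta(\gamma+n,c-\gamma)}\,(c+n)_k ,
\]
is not a technicality to be filled in later: it is false. Set $p=q=0$, where $\beta(\gamma+k,c-\gamma)/\beta(\gamma,c-\gamma)=(\gamma)_k/(c)_k$; the left-hand side becomes $(\gamma)_k$ and the right-hand side becomes $(\gamma+n)_k$, and these already disagree for $n=k=1$. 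So the proposal cannot be completed as written, and no amount of effort concentrated on that identity will close the gap.

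What your computation actually establishes, correctly, is
\[
\frac{d^n}{dz^n}\Big\{z^{\beta-1}E_{\alpha,\beta}^{\gamma,c}(\mu z^\alpha;p,q)\Big\}
=z^{\beta-n-1}\sum_{k=0}^{\infty}\frac{\beta(\gamma+k,c-\gamma;p,q)}{\beta(\gamma,c-\gamma)}\,\frac{(c)_k}{\Gamma(\alpha k+\beta-n)}\,\frac{(\mu z^\alpha)^k}{k!}
=z^{\beta-n-1}E_{\alpha,\beta-n}^{\gamma,c}(\mu z^\alpha;p,q),
\]
i.e.\ only the second lower parameter shifts while the upper parameters $\gamma,c$ are carried through unchanged---exactly as in the classical identity for the Prabhakar function, which this must reduce to at $p=q=0$. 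The mismatch therefore lies in the theorem as printed (whose right-hand side carries the shifted parameters $\gamma+n$, $c+n$ inherited from formula \eqref{der}, itself afflicted by the same normalisation problem, since the denominator $\beta(\gamma,c-\gamma)$ in \eqref{Fextended} does not reindex along with the numerator), not in your differentiation. The honest conclusion of your own argument is that the stated formula should read $z^{\beta-n-1}E_{\alpha,\beta-n}^{\gamma,c}(\mu z^\alpha;p,q)$; as submitted, your proof stops precisely at a step that cannot be carried out.
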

\begin{proof}
Replacing $z$ by $\mu z^\alpha$ in equation (\ref{der}) and multiply by $z^{\beta-1}$ and differentiate $n$ times respect to $z$, we obtain the required result.
\end{proof}

\section{conclusion}
In this study, we established a new extension of Mittag-Leffler(ML) and some of its results. We conclude that if $p=q$, then we get the results of extended Mittag-Leffler(ML) function defined in \cite{OY}. Furthermore, if $p=q=0$, then we have the classical Mittag-Leffler(ML) function.\\

\noindent{\bf Conflict of Interests:} \vskip 2mm

\noindent There is no conflict of interests.\vskip 2mm

\noindent{\bf Authors’ contributions:}\vskip 2mm

\noindent The authors contributed equally. All the authors read and approved the final manuscript.\\


\end{document}